\newcommand{\fdiff}[4]{\ensuremath{ \upsilon^{#4 }_{ #3 }  #1  \left(  #2 \right)  }}
\newcommand{\fdiffplus}[3]{ \fdiff {#1}{#2}{+}{#3} }
\newcommand{\fdiffmin}[3]{ \fdiff {#1}{#2}{-}{#3} }
\newcommand{\fdiffpm}[3]{ \fdiff {#1}{#2}{\pm}{#3} }
\newcommand{\fracvar}[4]{\ensuremath{ \upsilon_{ #3 }^{ #4} \left[ #1 \right] \left(  #2 \right)   }}
\newcommand{\fracvarplus}[3]{ \fracvar {#1}{#2}{\epsilon+}{#3} }
\newcommand{\fracvarmin}[3]{ \fracvar {#1}{#2}{\epsilon -}{#3} }
\newcommand{\fracvarpm}[3]{ \fracvar {#1}{#2}{#3}{\epsilon \pm} }
\newcommand{\svar}[4]{\ensuremath{ \mathcal{S}_{ #3 }^{ #4} \left[ #1 \right] \left(  #2 \right)   }}
\newcommand{\svarplus}[3]{ \svar{#1}{#2}{#3}{\epsilon+} }
\newcommand{\svarmin}[3]{ \svar{#1}{#2}{#3}{\epsilon -} }
\newcommand{\svarpm}[3]{ \svar{#1}{#2}{#3}{\epsilon \pm} }
\newcommand{\llim}[3]{\ensuremath{ \lim\limits_{ #1 \rightarrow #2} #3 }}
\newcommand{\fclass}[2]{\ensuremath{  \mathbb{#1}^{\, #2} }}
\newcommand{\holder}[1]{\fclass{H}{#1} }
\newcommand{\epnt}{\; .}
\newcommand{\ecma}{\; ,}
\newcommand{\soc}[2]{\ensuremath{ \chi_{#1}^{#2} }}
\newcommand{\bigoh}[1]{ \ensuremath{ \mathcal{O} \left(  #1 \right) }   }
\newcommand{\frdiffi}[3]{\ensuremath{\,_{#3}\mathbf{D}^{#1}_{#2}}}
\newcommand{\frdiffix}[2]{\frdiffi{#1}{ #2}{ }}
\newcommand{\frdiffii}[3]{\ensuremath{\,_{#3}\mathbf{I}^{#1}_{#2}}}
\newcommand{\frdiffiix}[2]{\frdiffii{#1}{x}{#2}}
\newcommand{\frdiffir}[3]{\ensuremath{\,_{#3 RL}\mathbf{D}^{#1}_{#2}}}
\newcommand{\frdiffixr}[2]{\frdiffir{#1}{#2}{ }}
\newtheorem{theorem}{Theorem}
\newtheorem{lemma}{Lemma}
\newtheorem{corollary}{Corollary}
\newtheorem{proposition}{Proposition}
\newtheorem{definition}{Definition}
\newtheorem{remark}{Remark}
\newtheorem{example}{Example}
\begin{document}

   \title[Scale space analysis of  singular functions]{Characterization of strongly  non-linear and singular functions by scale space analysis}
        \author {Dimiter Prodanov}
      \address{Correspondence: Environment, Health and Safety, IMEC vzw, Kapeldreef 75, 3001 Leuven, Belgium
      	e-mail: Dimiter.Prodanov@imec.be, dimiterpp@gmail.com}
    	
   	\begin{abstract}
    A central notion of physics is the rate of change. 
    While mathematically the concept of derivative represents an idealization of the linear growth, 
    power law types of non-linearities even in noiseless physical signals cause derivative divergence.
    As a way to characterize change of strongly nonlinear signals, this work introduces the concepts of scale space embedding and scale-space velocity operators.	Parallels with the scale relativity theory and fractional calculus are discussed. 
    The approach is exemplified by an application to  De Rham's function.
   	It is demonstrated how scale space embedding presents a simple way of characterizing the growth 
   	of functions defined by means of iterative function systems.
  
    \end{abstract}

\maketitle
 
{\it Key Words and Phrases}: 
  	  fractional calculus ;
  	  non-differentiable functions ; 
  	  pseudodifferential operators ;
  	  iteration function systems ;
  	  fractals ;
  	  {\it MSC 2010\/}: Primary 26A27: Secondary 26A15 ;  26A33 ; 26A16 ; 47A52 ; 4102

   	
\section{Introduction}
\label{seq:intro}

A central notion of physics is the rate of change. 
It can be argued that this perception inspired Newton and Leibniz to develop the apparatus of differential calculus. 
In its initial treatment, the notion of \textit{derivative} is limited only to the idealization of linear rate of change. 
 
On the other hand, modern calculus is not limited only to to using only ordinary derivatives and linear growth.
For example, \textit{fractional calculus} is used to model strongly non-linear phenomena, such as the those governed by power laws, or phenomena resulting in features at all scales -- i.e. fractals.
Many types of anomalous transport phenomena in physics are modeled using fractional diffusion-like equations \cite{Mainardi2007}.
The spatial complexity of a medium can impose geometrical constraints on transport processes on all length scales that can fundamentally alter the laws of standard diffusion \cite{Metzler2004}. 
Interesting links between fractional calculus and fractal random walks and statistical physics has been studied by \cite{Hilfer1995}.
For example, Continuous-Time-Random-Walks models are used to model anomalous diffusion phenomena \cite{Henry2006}. 
These models in limit yield fractional Fokker-Planck equations in terms of Caputo fractional derivatives \cite{Metzler2004}. 
Fractional diffusion models have been employed, for example, in hydrology \cite{Meerschaert2006}; heat conduction \cite{Hristov2011}; and anomalous drug absorption and disposition processes  \cite{Dokoumetzidis2009}.

It should be noted that in the general case fractional differential equations are difficult to solve exactly even with special functions.
A fractional embedding of an ordinary differential equation is not unique and different embeddings correspond to different microscopic physical models \cite{Hilfer2011}.
It also introduces sensitivity of the boundary conditions, which requires regularization procedures leading  to Caputo-type of derivatives.    
Another disadvantage of the fractional calculus is the difficulty to interpret the fractional derivative or integral because of their \textit{non-local character}. 
All this led to development of some alternatives of fractional calculus, retaining of the locality of the description \cite{Cherbit1991}. 
For example, such are the calculus on time-scales and local fractional calculus \cite{Hilger1998, Kolwankar1997a}.

Classically, physical variables, such as velocity or acceleration, are considered to be differentiable functions of position. 
On the other hand, quantum mechanical paths were found to be non-differentiable and stochastic in simulations \cite{Amir-Azizi1987}. 
The relaxation of the differentiability assumption could open new avenues in describing physical phenomena, for example, using the \textit{scale relativity theory}  developed by  Nottale \cite{Nottale1989}, which assumes  fractality of geodesics and quantum-mechanical paths.  
The main tenet of the scale relativity theory promotes the scale (temporal and/or spatial) of observation as an independent physical variable. 
This scale is considered to be irreducible in the sense that it can not be removed from the equations of dynamics by means of a mathematical limiting process.  
Moreover, by endowing it with the status of an independent variable the theory admits a continuum representation of the scale as a quantity.

In the present paper I follow this direction of investigation and give a more precise meaning to the physical intuition offered by scale relativity
by introducing the formal concept of a \textit{scale space} and defining mappings between derivatives in "physical" space and certain functions in this abstract scale space.
The paper demonstrates in an indirect manner also a connection between the principle of scale relativity and Caputo fractional derivatives. 
Idealized physical signals considered in this work are identified with H\"older-continuous functions (see \ref{sec:holder}).

\subsection{Fractional velocity}
\label{sec:fracvel}

Cherbit \cite{Cherbit1991} introduced the notion of \textsl{fractional velocity} as the limit of the fractional difference quotient. 
Its main application was the study of fractal phenomena and physical processes for which the instantaneous velocity is not well defined. 
\begin{definition}
	\label{def:fracvarn}
	Define \textsl{fractal variation} operators of  mixed order $n+\beta$ acting on a function $f(x)$ as
	\begin{align}
	\label{eq:fracdiffn}
	\fracvarplus {f}{x}{n+\beta} &:= (n+1)! \  \frac{ f( x+ \epsilon) - T_n (x, \epsilon) }{\epsilon^{n+\beta}}   \ecma   	\\
	\fracvarmin {f}{x}{n+\beta} &:= (-1)^{n} (n+1)! \ \frac{T_n (x, - \epsilon) - f (x -\epsilon)  }{\epsilon^{n+\beta}}   \epnt
	\end{align}
	where  $T_n(x, \epsilon), \ n \in \fclass{N}{}_0$ is the usual Taylor polynomial 
	\[ 
	T_n(x, \epsilon)= f(x) + \sum\limits_{k=1}^{n} \frac{f^{(k)}(x)}{k!} \epsilon^k \ecma
	\]
	and $\epsilon >0$ and  $0 < \beta \leq 1 $ are real parameters.
	\footnote{Positions of indices in the notation are switched compared to previous works \cite{Prodanov2015, Prodanov2016}.}
	Zero will be skipped from the notation in the purely fractional case.
\end{definition}

\begin{definition}[Fractional velocity]
	\label{def:frdiff2}
	Define the \textsl{fractional velocities} of order $n+ \beta$ of the  $f(x) $  as the limits (if they exist):
	\begin{align}
	\label{eq:fracdiff1}
	\fdiffplus {f}{x}{n+\beta} &:=  \llim{\epsilon}{0}{  \fracvarplus {f}{x}{n+\beta}}   \ecma   	\\
	\fdiffmin {f}{x}{n+\beta} &:=     \llim{\epsilon}{0}{\fracvarmin {f}{x}{n+\beta}   }   \epnt
	\end{align}
\end{definition}

\section{Minimal scale-dependent description}
\label{sec:minscale}

The notation $f(x)$ will be interpreted in two ways depending on the context: 
as the number $ f: x \rightarrow y$ if $f$ is evaluated  (a \textit{verb} sense) 
or as the prescription that  $f$ is used as an argument for the operator $P$ and the the image function is evaluated at the point $x$. 
Then $P : f(x) \rightarrow P[f] (x)$   (a \textit{noun} sense). 
The term operator will be used  as the map from one function to another.

\begin{definition}
	\label{def:embed}
	Define the real (resp. complex) scale-space  as the tensor products 
	\[
	\mathbb{S}  \cong \mathbb{R} \otimes \mathbb{R}_{+}, \ \ \mathbb{S}_C  \cong \mathbb{C} \otimes \mathbb{R}_{+} \epnt
	\]
	Let the function  $f: \mathbb{R} \mapsto \mathbb{C} $ be continuous on a compact subset of $\mathbb{R}$.
	Define the \textsl{scale embedding} of $f$ as the action of the operator
	\begin{flalign*}
	\mathcal{E} : & \ f(x )  \mapsto f (x, \epsilon) \cong f(x + \epsilon)  \\
	f (x, \epsilon): & \ \mathbb{S}  \mapsto  \mathbb{S}_C
	\end{flalign*}
	which adjoins a scale variable $\epsilon$ to the argument of the function.
\end{definition}
 
Unless stated otherwise we will further assume that $x$ is fixed and only the  scale variable $\epsilon$ is allowed to vary.

\subsection{Scale embedding of fractional velocities}
\label{sec:scfracvel}

As demonstrated previously, the fractional velocity has only "few" non-zero values. 
Therefore, it is useful to discriminate the set of arguments where the fractional velocity does not vanish.
\begin{definition}[Set of Change]
	The set of points where the fractional velocity exists finitely and $\fdiffpm{f}{x}{\beta}  \neq 0 $ will be denoted as the \textbf{set of change}
	$ \soc{\pm}{\beta} (f)$. 
\end{definition}  

The notion of scale invariance can be used complementary to scale dependence. 
Consider the fractional power function $x^\alpha$. For this function 
$
\fracvarplus{f}{0}{\beta}  = 1, \forall \epsilon >0
$.
Therefore, at this particular point the limit does not depend on the translation scale.
In a similar way, in general, the fractional variation of a slowly oscillating \holder{\alpha} function can be decomposed into a  
scale-invariant part  and a (possibly) scale-dependent one
\[
\fracvarplus{f}{x}{\alpha} = \fdiffplus{f}{x}{\alpha}   + \bigoh{1 }, \ x \in \soc{+}{\alpha} (f)
\]
Then we observe that the difference quotient
\[
\frac{ f (x+ \epsilon) - f(x) }{\epsilon}= \fdiffplus{f}{x}{\beta} \frac{1}{\epsilon^ {1-\alpha} } + \frac{\bigoh{ 1}}{\epsilon^ {1-\alpha}}
\]
is a diverging quantity, which is the staring point of the theory of Nottale.
What can be considered as a weakness of the original scale-relativistic approach is that numerical simulations of the  phenomena of interest are difficult to obtain since difference quotients diverge. 

One can define two types of \textbf{scale-dependent operators}  for a wide variety if  physical signals.
An extreme case of such signals are the singular functions, defined as
\begin{definition}
	\label{def:singular}
	A function $f(x)$ is called \underline{singular} on the interval $x \in[a,b]$, if it is i) non-constant, 
	ii) continuous;
	iii) $f^\prime(x)=0$ Lebesgue almost everywhere (i.e. the set of non-differentiability of $f$ is of measure 0) and 
	iv) $f(a) \neq f(b)$.
\end{definition}

Since for a singular signal the derivative either vanishes or it diverges then the rate of change for such signals cannot be characterized in terms of derivatives. 
One could apply to such signals  either the \textit{fractal variation operators}
of certain order or the difference quotients as Nottale does and avoid taking the limit. 
Alternately, as will be demonstrated further, the scale embedding approach can be used to reach the same goal. 
Singular functions can arise as point-wise limits of continuously differentiable ones.
Since the fractional velocity of a continuously-differentiable function vanishes we are lead to postpone taking the limit and only apply L'H\^ospital's rule, which under certain hypotheses detailed further will reach the same limit as $\epsilon \rightarrow 0$.
Therefore, we are set to introduce another pair of operators which in limit are equivalent to the fractional velocities 
notably these are the left (resp. right) \textit{scale velocity} operators:
\begin{flalign}
\svarmin {f}{x}{\beta} :=  \frac{1}{1 -\left\lbrace \beta\right\rbrace  } \epsilon^{\beta} 
\frac{\partial}{\partial \epsilon } f ( x- \epsilon) \ecma\\  
\svarplus {f}{x}{\beta} := \frac{1}{1 -\left\lbrace \beta\right\rbrace  } \epsilon^{\beta} 
\frac{\partial}{\partial \epsilon } f (x+ \epsilon)    \ecma
\end{flalign}
where $ \left\lbrace \cdot\right\rbrace  $ represents the fractional part taken for consistency with the case when $\beta=1$.
In this formulation  the value of $1-\beta$ can be considered as the magnitude of deviation from  linearity (or differentiability) of the signal at that point.
The $\epsilon$ parameter, which is not necessarily small, represents the scale of observation.

The equivalence in limit is guaranteed by the following result:
\begin{proposition}
	\label{prop:scaledif}
Let $f^\prime(x)$ be continuous and non-vanishing uniformly in $(x, x \pm \mu)$. 
Then
\[
 \lim\limits_{\epsilon \rightarrow 0 } \fracvarpm {f}{x}{1-\beta} =  \lim\limits_{\epsilon \rightarrow 0 } \svarpm {f}{x}{\beta}
\]
if one of the limits exits.	
\end{proposition}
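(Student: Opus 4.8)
The plan is to notice that $\svarpm{f}{x}{\beta}$ is exactly the quotient of the $\epsilon$-derivatives of the numerator and denominator of $\fracvarpm{f}{x}{1-\beta}$, so that the asserted equality of limits is a direct application of L'H\^ospital's rule --- the very manoeuvre anticipated in the paragraph introducing the scale velocity. Throughout one may take $\beta\in(0,1)$: then $\{\beta\}=\beta$, and the order $1-\beta\in(0,1)$ is admissible in Definition~\ref{def:fracvarn} with $n=0$ (for $\beta=1$ both members tend to $0$ and nothing is to prove). Treating the $+$ case, and using $T_0(x,\epsilon)=f(x)$, I would write
\[
\fracvarplus{f}{x}{1-\beta}=\frac{f(x+\epsilon)-f(x)}{\epsilon^{1-\beta}}=:\frac{g(\epsilon)}{h(\epsilon)},\qquad
\svarplus{f}{x}{\beta}=\frac{\epsilon^{\beta}}{1-\beta}\,\frac{\partial}{\partial\epsilon}f(x+\epsilon)=\frac{g'(\epsilon)}{h'(\epsilon)},
\]
since $g'(\epsilon)=f'(x+\epsilon)$ and $h'(\epsilon)=(1-\beta)\epsilon^{-\beta}$. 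So the two operators are the two members of an indeterminate-quotient pair and its L'H\^ospital transform.

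The next step is to check the hypotheses of L'H\^ospital's rule on the one-sided interval $(0,\mu)$. Continuity of $f'$ on $(x,x+\mu)$ makes $f$ of class $C^{1}$ there, so $g$ is differentiable on $(0,\mu)$ and continuous up to $0$ with $g(0)=0$; $h$ is likewise continuous with $h(0)=0$ and differentiable with $h'(\epsilon)=(1-\beta)\epsilon^{-\beta}\neq 0$ on $(0,\mu)$; and since $1-\beta>0$ the quotient $g/h$ is a genuine $0/0$ indeterminacy at $\epsilon=0^{+}$. (Non-vanishing of $f'$ keeps $g'$ itself nonzero, so $\svarplus{f}{x}{\beta}$ is well defined, and uniform continuity lets $f'$ extend continuously to $x$ with $f'(x)$ finite, a fact used below.) The one-directional L'H\^ospital theorem then yields: if $\llim{\epsilon}{0}{\svarplus{f}{x}{\beta}}=\llim{\epsilon}{0}{g'(\epsilon)/h'(\epsilon)}$ exists, then $\llim{\epsilon}{0}{\fracvarplus{f}{x}{1-\beta}}$ exists and coincides with it.

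It remains to settle the ``if one of the limits exists'' clause, which is the only delicate point, L'H\^ospital running only one way. Under the present hypotheses, however, the scale-velocity side is automatically convergent: $f'(x+\epsilon)\to f'(x)$, a finite number, while $\epsilon^{\beta}\to 0$ because $\beta>0$, so $\llim{\epsilon}{0}{\svarplus{f}{x}{\beta}}=0$ in any case; by the previous paragraph $\llim{\epsilon}{0}{\fracvarplus{f}{x}{1-\beta}}=0$ as well, and the two limits agree whenever either is assumed to exist (both, in fact, equal $\fdiffplus{f}{x}{1-\beta}$ of Definition~\ref{def:frdiff2}). The left-sided case is entirely analogous, with $x-\epsilon$ in place of $x+\epsilon$, both members again vanishing in the limit. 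The genuine obstacle is thus the failure of the converse of L'H\^ospital in general; it is neutralized here precisely by the regularity hypothesis, which is why the statement asks for $f'$ continuous and non-vanishing rather than permitting $f'$ to diverge --- that richer regime (as for $x^{1-\beta}$ near $0$, where the common limit is nonzero) would call for a Cauchy mean value / monotonicity argument in place of the bare rule.
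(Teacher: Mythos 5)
Your identification of the mechanism is the same as the paper's: $\svarplus{f}{x}{\beta}$ is precisely the l'H\^opital transform $g'/h'$ of the $0/0$ quotient $\fracvarplus{f}{x}{1-\beta}=g/h$ with $g(\epsilon)=f(x+\epsilon)-f(x)$ and $h(\epsilon)=\epsilon^{1-\beta}$, and the implication ``$\lim g'/h'$ exists $\Rightarrow$ $\lim g/h$ exists and is equal'' is handled correctly. The gap is in how you dispose of the converse. You argue that the scale-velocity limit always exists and equals $0$, on the grounds that ``uniform continuity lets $f'$ extend continuously to $x$ with $f'(x)$ finite.'' That is a misreading of the hypothesis: ``uniformly'' qualifies the non-vanishing of $f'$ on the \emph{open} one-sided interval $(x, x\pm\mu)$ (the paper's own proof says exactly this: ``$f'(x)\neq 0$ must hold uniformly in $(x, x\pm\mu)$''), and $f'$ is allowed to diverge at $x$ itself. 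That divergent regime is the entire point of the proposition: for $f(y)=y^{1-\beta}$ at $x=0$ one has $f'(\epsilon)=(1-\beta)\epsilon^{-\beta}$, continuous and non-vanishing on $(0,\mu)$, and both limits equal $1$, not $0$. Your reading would reduce the proposition to the assertion $0=0$, which contradicts the motivating computation $\fracvarplus{x^\alpha}{0}{\alpha}=1$ for all $\epsilon>0$ in Section 2 and the later applications (the scale fixed point theorem, the De Rham example), where the common limit is nonzero exactly on the set of change.

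What therefore remains unproven in your write-up is the second half of the ``if one of the limits exists'' clause: if $\lim_{\epsilon\to 0}\fracvarpm{f}{x}{1-\beta}$ exists, does $\lim_{\epsilon\to 0}\svarpm{f}{x}{\beta}$ exist? You flag this yourself in the final sentence and defer it to ``a Cauchy mean value / monotonicity argument'' that you do not supply; that deferred argument is precisely the missing content, since the bare rule runs only one way. In fairness, the paper's own proof is equally terse here --- it writes the l'H\^opital chain and imposes the uniform non-vanishing of $f'$ without addressing the failure of the converse of l'H\^opital --- so your forward direction matches the paper's argument. But a complete proof must either supply the converse argument (e.g.\ via the Cauchy mean value theorem, writing $g(\epsilon)/h(\epsilon)=g'(\xi_\epsilon)/h'(\xi_\epsilon)$ for some $\xi_\epsilon\in(0,\epsilon)$ and then using the sign and continuity of $f'$ to pass from the subsequence $\xi_\epsilon$ to the full limit) or restrict the claim to the one implication that l'H\^opital actually gives.
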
	
\begin{proof}
We require partial evaluation of the limit $\epsilon \rightarrow 0$.
The left expression transforms into the right expression by application of l'H\^opital's rule treating $x$ as a fixed parameter and $\epsilon$ as a variable \cite{Prodanov2015}. 
Consider the left scale velocity. That is  
\[
 \lim\limits_{\epsilon \rightarrow 0 } \frac{f(x+ \epsilon)- f(x)}{\epsilon^\beta} = 
 \lim\limits_{\epsilon \rightarrow 0 } \frac{f^\prime(x+ \epsilon) }{\beta \epsilon^{\beta-1}} =   
 \lim\limits_{\epsilon \rightarrow 0 } \svarpm {f}{x}{1-\beta}
\]
if $f^\prime(x+ \epsilon) \neq 0$. 
However, since $\epsilon$ is arbitrary then  $f^\prime(x ) \neq 0$ must hold uniformly in $(x, x \pm \mu)$.
The right case is proven in a similar manner. 
\end{proof}	

Therefore, we can identify the scale embedding of the derivative with its local generalization
in terms of \textit{fractional velocity}  as \svarpm {f}{x}{\beta} by the following definition 
 
\begin{definition}
	\label{def:derembed}
	Define the scale embeddings of the derivative $f^{\prime}(x ) \in \fclass{C}{}$ 
	as the  forward and backward $\beta$-scale velocities
	\begin{flalign*}
	\mathcal{E}^{\prime}_{+} :  \ f^{\prime}(x ) &  \mapsto  \svarplus {f}{x}{\beta}   \\
		\mathcal{E}^{\prime}_{-} :  \ f^{\prime}(x ) &  \mapsto  \svarmin {f}{x}{\beta} \\
		\svarpm {f}{x}{\beta} &:  \mathbb{C} \otimes \mathbb{R}_{+}  \mapsto \mathbb{C} \epnt
	\end{flalign*}
\end{definition}

\section{Link to fractional derivatives}
\label{sec:frderiv}

Consider the fractional derivative of $f$ in the sense of Caputo $ \frdiffix{\beta}{ a }   f (x)$ (for details see  \ref{sec:frint} ).
Then the following result can be stated:

\begin{theorem}
	Let $f(x) \in \fclass{C}{1}$ for $x$ uniformly in $[a, x)$.
	Then 
	\[
	\svarplus { \frdiffiix{\beta}{a} f}{x}{\alpha}  = 
	\frac{ \epsilon^{\alpha} \, {{\left( x+\epsilon-a \right) }^{\beta-1}}}{ \left(1 - \left\lbrace \alpha\right\rbrace \right) \Gamma\left( \beta\right) }  f \left( a\right) + 
	\dfrac{ \epsilon^{\alpha}}{1 - \left\lbrace \alpha\right\rbrace  } \frdiffix{1- \beta}{ a }  f (x + \epsilon) \epnt
	\]
\end{theorem}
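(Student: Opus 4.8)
The plan is to reduce the statement to one application of the Leibniz rule for the Riemann--Liouville fractional integral $\frdiffiix{\beta}{a} f$, assuming, as elsewhere in the paper, that $0<\beta<1$. Since $x$ is held fixed, unfolding the definition of the forward $\alpha$-scale velocity gives
\[
\svarplus{\frdiffiix{\beta}{a} f}{x}{\alpha} = \frac{\epsilon^{\alpha}}{1-\{\alpha\}}\,\frac{\partial}{\partial\epsilon}\bigl(\frdiffiix{\beta}{a} f\bigr)(x+\epsilon) = \frac{\epsilon^{\alpha}}{1-\{\alpha\}}\,\bigl(\frdiffiix{\beta}{a} f\bigr)'(x+\epsilon),
\]
so it suffices to compute $g'(y)$ for $g(y):=\bigl(\frdiffiix{\beta}{a} f\bigr)(y)=\frac{1}{\Gamma(\beta)}\int_a^{y}(y-t)^{\beta-1}f(t)\,dt$ and then set $y=x+\epsilon$.

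First I would move the scale variable out of the singular kernel by the substitution $s=y-t$, which turns $g(y)$ into $\frac{1}{\Gamma(\beta)}\int_0^{y-a}s^{\beta-1}f(y-s)\,ds$. In this form the $y$-dependence sits only in the upper limit and in the argument of $f$, while the factor $s^{\beta-1}$ --- integrable since $\beta-1>-1$ --- is inert. Because $f$ and $f'$ are continuous, hence bounded, on the compact interval joining $a$ and $x+\epsilon$, the Leibniz differentiation rule applies and yields
\[
g'(y) = \frac{(y-a)^{\beta-1}}{\Gamma(\beta)}\,f(a) + \frac{1}{\Gamma(\beta)}\int_0^{y-a}s^{\beta-1}f'(y-s)\,ds .
\]
Reversing the substitution, the remaining integral is $\frac{1}{\Gamma(\beta)}\int_a^{y}(y-t)^{\beta-1}f'(t)\,dt$, which is exactly the Caputo derivative $\frdiffix{1-\beta}{a} f(y)$ of order $1-\beta\in(0,1)$ (equivalently, $\frdiffiix{\beta}{a} f'$). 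Substituting $y=x+\epsilon$ and multiplying through by $\epsilon^{\alpha}/(1-\{\alpha\})$ then reproduces the two terms of the claimed identity.

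The only genuinely analytic point is the interchange of $d/dy$ with the improper integral; everything else is bookkeeping. I would justify it by splitting $\int_0^{y-a}=\int_0^{\delta}+\int_{\delta}^{y-a}$: on $[\delta,y-a]$ the integrand has a continuous partial derivative in $y$, so the classical Leibniz rule applies verbatim, while on $[0,\delta]$ the difference quotient $s^{\beta-1}\bigl(f(y+h-s)-f(y-s)\bigr)/h$ is dominated by $Ms^{\beta-1}\in L^{1}(0,\delta)$ uniformly for small $h$, so dominated convergence identifies its limit as $s^{\beta-1}f'(y-s)$; letting $\delta\to0$ removes the artifact. A shorter route is to quote the standard composition identity $\frac{d}{dy}\bigl(\frdiffiix{\beta}{a} f\bigr)=\frdiffixr{1-\beta}{a} f$ together with the Riemann--Liouville/Caputo reconciliation $\frdiffixr{1-\beta}{a} f=\frdiffix{1-\beta}{a} f+\frac{f(a)(y-a)^{\beta-1}}{\Gamma(\beta)}$, which collapses the computation to two lines.
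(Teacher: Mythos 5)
Your proposal is correct and follows essentially the same route as the paper's own proof: substitute $s=y-t$ to move the scale variable out of the singular kernel, differentiate under the integral to produce the boundary term $f(a)(y-a)^{\beta-1}/\Gamma(\beta)$ plus the integral of $s^{\beta-1}f'(y-s)$, and substitute back to recognize the Caputo derivative $\frdiffix{1-\beta}{a} f$. Your dominated-convergence justification of the interchange is in fact more careful than the paper's bare appeal to absolute continuity, but it is the same argument in substance.
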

\begin{proof}
We will compute the quantity
\[
\svarplus { \frdiffiix{\beta}{a} f}{x}{\alpha}  =
\dfrac{1}{1 - \left\lbrace \alpha\right\rbrace  } \dfrac{\epsilon^\alpha}{\Gamma(\beta)} 
\dfrac{\partial }{\partial \epsilon} \int_{a}^{x +\epsilon}   f \left( t \right)  \left( x-t +\epsilon \right)^{\beta -1}dt \epnt
\] 
Change of variables $z=x-t+\epsilon$ leads to
\[
\frdiffii{\beta}{x+\epsilon}{a} f (x + \epsilon) =\frac{1}{\Gamma\left( \beta\right) } \int_{0}^{x+\epsilon-a}f \left( x+\epsilon-z \right) \, {{z}^{\beta-1}}dz
\]
Since $\frdiffii{\beta}{x+\epsilon}{a} f (x + \epsilon)  \in AC ([a,b])$ then
differentiating leads to
\[
\frac{ f \left( a\right) \, {{\left( x+\epsilon-a \right) }^{\beta-1}}}{\Gamma\left( \beta\right) }
- \frac{1}{\Gamma\left( \beta\right) } \int_{x+\epsilon-a}^{0}\left( \frac{\partial}{\partial\,\epsilon}\, f\left( x+\epsilon -z \right) \right) \, {{z}^{\beta-1}}dz
\]
which after an additional change of variables leads to  
\[
\frac{ f \left( a\right) \, {{\left( x+\epsilon-a \right) }^{\beta-1}}}{\Gamma\left( \beta\right) }+ \frac{1}{\Gamma\left( \beta\right) }\int_{a}^{x +\epsilon}   f \left( t \right)  \left( x-t +\epsilon \right)^{\beta -1}dt
\]
The final expressions can be assembled into
\[
\svarplus { \frdiffiix{\beta}{a} f}{x}{\alpha}  =
\dfrac{ \epsilon^{\alpha}}{1 - \left\lbrace \alpha\right\rbrace  } \left( 
\frac{ f \left( a\right) \, {{\left( x+\epsilon-a \right) }^{\beta-1}}}{\Gamma\left( \beta\right) } +
 \frac{1}{\Gamma\left( \beta\right) }\int_{a}^{x +\epsilon}   f^{\prime} \left( t \right)  \left( x-t +\epsilon \right)^{\beta -1}dt
\right) 
\]
Therefore,
\[
\svarplus { \frdiffiix{\beta}{a} f}{x}{\alpha}  = 
  \frac{ \epsilon^{\alpha} \, {{\left( x+\epsilon-a \right) }^{\beta-1}}}{ \left(1 - \left\lbrace \alpha\right\rbrace \right) \Gamma\left( \beta\right) }  f \left( a\right) + 
  \dfrac{ \epsilon^{\alpha}}{1 - \left\lbrace \alpha\right\rbrace  } \frdiffix{1- \beta}{ a }  f (x +\epsilon)
\]
\end{proof}
Based on the theorem there are several useful corollaries to be stated:
\begin{corollary}
	If $f(a)$ is defined then under the same hypothesis for $f$
\[
\svarplus { \frdiffiix{\beta}{a} f - f(a)}{x}{\alpha}  =  
\dfrac{ \epsilon^{\alpha}}{1 - \left\lbrace \alpha\right\rbrace  } \frdiffix{1- \beta}{ a }  f (x  +\epsilon)
\]
\end{corollary}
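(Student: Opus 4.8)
The plan is to obtain the Corollary directly from the Theorem, using only the linearity of the scale velocity operator $\svarplus{\cdot}{x}{\alpha}$ and of the fractional integral $\frdiffiix{\beta}{a}$. I read the bracketed argument $\frdiffiix{\beta}{a} f - f(a)$ as $\frdiffiix{\beta}{a}(f - f(a))$, i.e. the order-$\beta$ Riemann--Liouville integral of the shifted function $g := f - f(a)$; this is the natural reading, and indeed the only one under which a boundary term can disappear, since $\svarplus{\cdot}{x}{\alpha}$ kills additive constants and would otherwise reproduce the Theorem's formula verbatim.

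First I would note that $g = f - f(a)$ differs from $f$ by a constant, hence $g \in \fclass{C}{1}$ on $[a,x)$ as well, so the Theorem applies with $g$ in place of $f$ and its conclusion reads
\[
\svarplus{\frdiffiix{\beta}{a} g}{x}{\alpha} =
\frac{\epsilon^{\alpha}\,(x+\epsilon-a)^{\beta-1}}{(1-\{\alpha\})\,\Gamma(\beta)}\, g(a)
+ \dfrac{\epsilon^{\alpha}}{1-\{\alpha\}}\,\frdiffix{1-\beta}{a} g(x+\epsilon) \epnt
\]
Then I would use the two elementary facts that make the right-hand side collapse: $g(a) = f(a) - f(a) = 0$, which annihilates the first summand; and $g^{\prime} = f^{\prime}$, so that $\frdiffix{1-\beta}{a} g (x+\epsilon) = \frdiffix{1-\beta}{a} f (x+\epsilon)$ — this last identity rests on the representation of the order-$(1-\beta)$ Caputo derivative as the order-$\beta$ Riemann--Liouville integral of the ordinary derivative, which is exactly the step already carried out inside the proof of the Theorem. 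Substituting both yields the asserted formula.

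An equivalent route avoids re-applying the Theorem to $g$: by linearity of the fractional integral, $\frdiffiix{\beta}{a}(f - f(a))(x) = \frdiffiix{\beta}{a} f(x) - f(a)\cdot\frac{(x-a)^{\beta}}{\Gamma(\beta+1)}$; applying the linear operator $\svarplus{\cdot}{x}{\alpha}$ term by term and computing $\svarplus{(x-a)^{\beta}/\Gamma(\beta+1)}{x}{\alpha} = \frac{\epsilon^{\alpha}(x+\epsilon-a)^{\beta-1}}{(1-\{\alpha\})\,\Gamma(\beta)}$ (using $\Gamma(\beta+1)=\beta\,\Gamma(\beta)$) shows that the $f(a)$-contribution is, up to sign, the boundary term appearing in the Theorem, so it cancels. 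In either form the argument is essentially bookkeeping; the only points that need genuine care — and, I expect, the only possible source of confusion — are fixing the meaning of the notation $\frdiffiix{\beta}{a} f - f(a)$ and checking that the $\Gamma$-factor produced by differentiating $(x+\epsilon-a)^{\beta}$ in the scale variable matches the Theorem's boundary coefficient exactly. There is no substantive obstacle.
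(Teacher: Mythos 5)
Your proof is correct; the paper states this corollary without proof, presenting it as an immediate consequence of the preceding theorem, and your derivation --- apply the theorem to $g = f - f(a)$, so that $g(a)=0$ annihilates the boundary term while $\frdiffix{1-\beta}{a} g\,(x+\epsilon) = \frdiffix{1-\beta}{a} f\,(x+\epsilon)$ because the Caputo derivative depends only on $g^{\prime} = f^{\prime}$ --- is exactly the intended one. You also correctly dispose of the only genuine subtlety, namely that the bracketed argument must be read as $\frdiffiix{\beta}{a}\left( f - f(a)\right)$: under the alternative reading the operator $\svarplus{\cdot}{x}{\alpha}$ would kill the additive constant $f(a)$ and reproduce the theorem's formula with the boundary term intact, contradicting the statement, and your second route confirms that the fractional integral of the constant $f(a)$ contributes precisely the theorem's boundary coefficient.
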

\begin{corollary}
	If $f(a)$ is defined then under the same hypothesis for $f$
	\[
	\svarplus { \frdiffiix{1-\beta}{a} f - f(a)}{x}{\alpha}  =  
	\dfrac{ \epsilon^{\alpha}}{1 - \left\lbrace \alpha\right\rbrace  } \frdiffix{ \beta}{ a }  f (x +\epsilon )
	\]
\end{corollary}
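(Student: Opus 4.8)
The plan is to obtain this identity as the specialization of the preceding Theorem — equivalently, of the preceding Corollary — to the shifted order $1-\beta$ in place of $\beta$, after checking that the hypotheses transfer. The function $f$ and the assumption $f \in \fclass{C}{1}$ uniformly on $[a,x)$ do not mention the order of the fractional operator, so the only compatibility point to verify is that $1-\beta$ is an admissible order, i.e. $1-\beta \in (0,1)$; this holds for every $\beta \in (0,1)$. (The endpoint $\beta=1$ collapses $\frdiffiix{1-\beta}{a}$ to the identity and $\frdiffix{\beta}{a}$ to the ordinary derivative, consistently with Proposition~\ref{prop:scaledif}.)

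Granting this, I would first simply read the claim off the preceding Corollary: substituting $\beta \mapsto 1-\beta$ in
\[
\svarplus { \frdiffiix{\beta}{a} f - f(a)}{x}{\alpha}  =  \dfrac{ \epsilon^{\alpha}}{1 - \left\lbrace \alpha\right\rbrace  } \frdiffix{1- \beta}{ a }  f (x  +\epsilon)
\]
turns the left-hand side into $\svarplus { \frdiffiix{1-\beta}{a} f - f(a)}{x}{\alpha}$ and the order on the right into $1-(1-\beta)=\beta$, which is exactly the asserted formula. If a self-contained argument is wanted, I would instead rerun the computation in the proof of the Theorem with kernel exponent $-\beta$ in place of $\beta-1$ and with $f$ replaced by $f-f(a)$: starting from
\[
\svarplus { \frdiffiix{1-\beta}{a} f - f(a)}{x}{\alpha}  = \dfrac{1}{1 - \left\lbrace \alpha\right\rbrace  } \dfrac{\epsilon^{\alpha}}{\Gamma(1-\beta)} \dfrac{\partial }{\partial \epsilon} \int_{a}^{x +\epsilon} \bigl( f(t) - f(a) \bigr) \left( x-t +\epsilon \right)^{-\beta} dt \ecma
\]
perform the substitution $z = x-t+\epsilon$, differentiate under the integral sign (legitimate since the Riemann--Liouville integral of $f' \in \fclass{C}{}$ is absolutely continuous on $[a,b]$), note that the boundary term from the moving endpoint now carries the factor $f(a)-f(a)=0$ and drops out, and change variables back to recover $\dfrac{\epsilon^{\alpha}}{(1-\left\lbrace\alpha\right\rbrace)\,\Gamma(1-\beta)} \int_a^{x+\epsilon} f'(t)\,(x-t+\epsilon)^{-\beta}\,dt$, which equals $\dfrac{\epsilon^{\alpha}}{1 - \left\lbrace\alpha\right\rbrace} \frdiffix{\beta}{a} f(x+\epsilon)$ by the definition of the Caputo derivative of order $\beta$.

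I do not expect a real obstacle here; the one point that genuinely needs care is reading the symbol $\frdiffiix{1-\beta}{a} f - f(a)$ as $\frdiffiix{1-\beta}{a}\bigl(f-f(a)\bigr)$, since this is precisely what forces the endpoint contribution to vanish and explains why the right-hand side, unlike that of the Theorem, carries no additive ${(x+\epsilon-a)}^{\beta-1} f(a)$ term. Everything else — the change of variables, the differentiation under the integral sign, and the identification of the fractional integral of $f'$ with $\frdiffix{\beta}{a} f(x+\epsilon)$ — is routine and already carried out in the proof of the Theorem.
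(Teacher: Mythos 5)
Your proposal is correct and matches the paper's (implicit) route: the paper states this corollary without separate proof, as the specialization $\beta \mapsto 1-\beta$ of the preceding corollary, which is exactly your first argument. Your observation that $\frdiffiix{1-\beta}{a} f - f(a)$ must be read as $\frdiffiix{1-\beta}{a}\bigl(f-f(a)\bigr)$ so that the boundary term from the moving endpoint vanishes is the right (and only nontrivial) point, and your self-contained rerun of the theorem's computation confirms it.
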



\begin{proposition}
	If $f(a)$ is defined then under the same hypothesis for $f$
	\[
	\fdiffplus{ \frdiffiix{1-\beta}{a} f }{x}{\alpha} = \llim{\epsilon}{0}{}\
	\svarplus { \frdiffiix{1-\beta}{a} f - f(a)}{x}{1-\alpha}  =  
	\dfrac{1}{  \alpha   } \llim{\epsilon}{0}{} 
	\epsilon^{1-\alpha}\frdiffix{ \beta}{ a }  f (x +\epsilon)
	\]
\end{proposition}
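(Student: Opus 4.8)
The plan is to read the three-member identity as the concatenation of two facts already on the table: Proposition~\ref{prop:scaledif}, which trades a forward fractional velocity of order $\alpha$ for a forward scale velocity of order $1-\alpha$, and the second Corollary above, which evaluates that scale velocity in terms of the Caputo derivative $\frdiffix{\beta}{a} f$. Throughout I keep $x$ fixed with $x>a$ and take $0<\alpha<1$, $0<\beta<1$; I abbreviate $g := \frdiffiix{1-\beta}{a}\!\left(f-f(a)\right)$, which is the operand convention used in the Corollaries. Since $g$ and $\frdiffiix{1-\beta}{a} f$ differ only by $\frac{f(a)}{\Gamma(2-\beta)}(x-a)^{1-\beta}$, a constant multiple of a power that is smooth near $x>a$ and hence has vanishing $\alpha$-fractional velocity there, the leftmost member of the claimed identity equals $\fdiffplus{g}{x}{\alpha}$.

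For the first equality I apply Proposition~\ref{prop:scaledif} to $g$, with the parameter $\beta$ of that statement set to $1-\alpha$, so that the exponent $1-\beta$ occurring there becomes $\alpha$; this gives $\fdiffplus{g}{x}{\alpha} = \llim{\epsilon}{0}{\svarplus{g}{x}{1-\alpha}}$ as soon as either limit exists, which is the first equality. Its hypothesis asks that $g^{\prime}$ be continuous and non-vanishing uniformly on a one-sided neighbourhood $(x,x+\mu)$. Differentiating the Riemann--Liouville integral exactly as in the computation proving the Theorem above --- with $\beta$ replaced by $1-\beta$ and $f$ replaced by $f-f(a)$, whose value at $a$ is $0$ --- one finds $g^{\prime}(y)=\frdiffix{\beta}{a} f(y)$; for $f\in\fclass{C}{1}$ on $[a,x]$ and $y$ bounded away from $a$ this Caputo derivative is continuous, and its non-vanishing near $x$ is precisely the non-degeneracy carried in ``the same hypothesis for $f$''.

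For the second equality I invoke the second Corollary above with its parameter $\alpha$ replaced by $1-\alpha$. Since $0<\alpha<1$ forces $1-\alpha\in(0,1)$, the fractional part obeys $\left\lbrace 1-\alpha\right\rbrace = 1-\alpha$, whence $1-\left\lbrace 1-\alpha\right\rbrace = \alpha$, and the Corollary reads
\[
\svarplus {\frdiffiix{1-\beta}{a} f - f(a)}{x}{1-\alpha} = \frac{\epsilon^{1-\alpha}}{\alpha}\, \frdiffix{\beta}{a} f\left(x+\epsilon\right)\epnt
\]
Passing to the limit $\epsilon\rightarrow 0$ on both sides yields the remaining identity. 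The only point that is not a routine substitution is the verification of the hypothesis of Proposition~\ref{prop:scaledif} for $g$ --- that $\frdiffix{\beta}{a} f$ be finite, continuous and non-zero uniformly in $(x,x+\mu)$ --- which is what compels us to keep $x$ strictly to the right of $a$ (so that the boundary contribution $(y-a)^{-\beta}$ in the Riemann--Liouville derivative stays bounded) and to assume $f\in\fclass{C}{1}$ together with a non-degeneracy of its Caputo derivative near $x$. Once this is granted the chain is immediate, and --- consistently with $\frdiffiix{1-\beta}{a} f$ being classically differentiable near $x$ under these hypotheses --- all three members in fact reduce to $0$; the content of the statement lies in the chain of identities itself, in particular the last intermediate form relating the scale velocity to the Caputo derivative, rather than in the common value.
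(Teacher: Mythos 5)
Your argument is correct and follows exactly the route the paper intends (the paper in fact states this Proposition without any proof, immediately after the two Corollaries): substitute $\alpha\mapsto 1-\alpha$ in the second Corollary, use $1-\left\lbrace 1-\alpha\right\rbrace=\alpha$, and identify the limit of the scale velocity with the fractional velocity via Proposition~1. Your additional care --- reading the operand as $\frdiffiix{1-\beta}{a}\left(f-f(a)\right)$, checking that the discrepancy $\frac{f(a)}{\Gamma(2-\beta)}(x-a)^{1-\beta}$ is smooth for $x>a$ and so does not affect the $\alpha$-velocity, and verifying the hypothesis of Proposition~1 for $g$ --- supplies details the paper leaves implicit, and your closing observation that all three members vanish under the stated hypotheses is accurate.
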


\section{Characterization of the growth of some singular functions}
\label{sec:singular}

The \textbf{fractional velocity } is able to characterize growth of functions varying on fractal sets.
In the following example we demonstrate that the fractional velocity can be non-zero on a Cauchy-dense set.

\begin{example}
	De Rham's function is defined by the functional equations
	\[
	R_a(x):=
	\begin{cases} 
	0 , & x= 0 \\
	a  R_a(2 x) ,&  0  \leq x < \dfrac{1}{2}  \\
	(1-a)  R_a(2 x-1) +a ,&  \dfrac{1}{2} \leq x \leq 1 \\
	1, & x = 1
	\end{cases}  
	\]
	We will compute $\fdiffplus{R_a}{x}{\beta}$ in the interval $[0, 1]$.
		
	The computation follows from Kawamura \cite{Kawamura2011}.
	Consider the dyadic representation of the number $	x =  \overline{0.d_1 \ldots d_n}, \ d \in \{0, 1 \}$.
	Lomnicki and  Ulam \cite{Lomnicki1934} proved the arithmetic representation of $ R_a(x)$ :
	\begin{flalign*}
	R_a(x) &= \dfrac{a}{1-a} \sum\limits_{k=1}^{n} d_n a^{k-s_n} (1-a)^{s_n}  \ecma  
	s_n  = \sum\limits_{k=1}^{n} d_k 
	\end{flalign*}
	Then the increment of $R_a(x)$ for $\epsilon = 1/2^n $ is
	\[
	R_a  \left( x +  {1/2^n} \right)  - R_a \left( x \right)   = \dfrac{a}{1-a} a^{k-s_n} (1-a)^{s_n} 
	\]
	so that 
	\[
	\fracvarplus{R_a}{x}{\beta} =  \dfrac{a}{1-a} 2^{\beta k} a^{k-s_n} (1-a)^{s_n} =  
	{\left( 2^\beta a \right)}^{k} \left( \dfrac{1}{a} -1 \right)^{s_n-1} \epnt
	\]	
	Therefore, if $a= 1/2^\beta$ the ratio does not depend on $k$.
  
	Let's fix $k=n$ then if the representation is terminating (i.e. \textit{x} is dyadic rational)
	\[
	\fdiffplus{R_a}{x}{\beta} =  \left( 2^\beta - 1 \right)^{s_n-1} \epnt
	\]
	In contrast, if the representation is non-terminating then $\fdiffplus{R_a}{x}{\beta} = 0$ as $s_n$ grows without a bound.
\end{example}

\begin{figure}[h]
\begin{tabular}{ll}
A & B \\
\includegraphics[width=0.5\linewidth, bb=0 0 800 600]{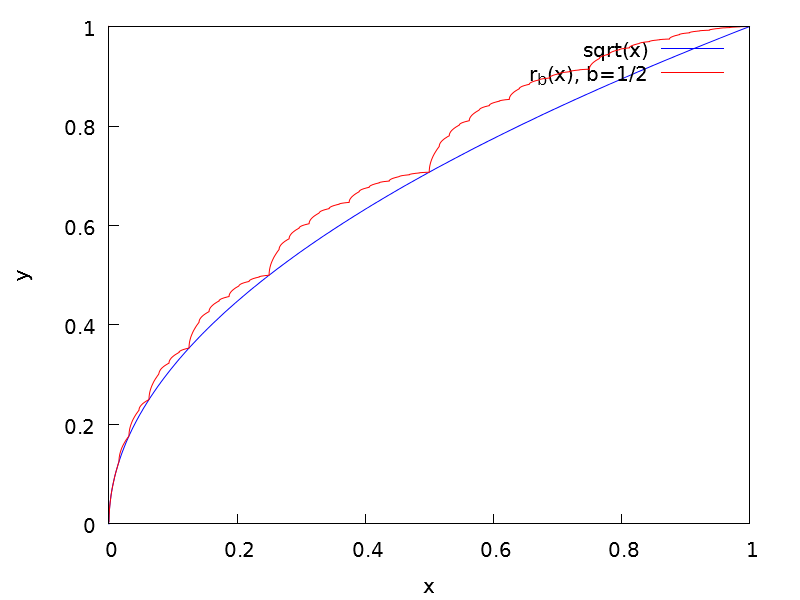} & 
\includegraphics[width=0.5\linewidth, bb=0 0 800 600]{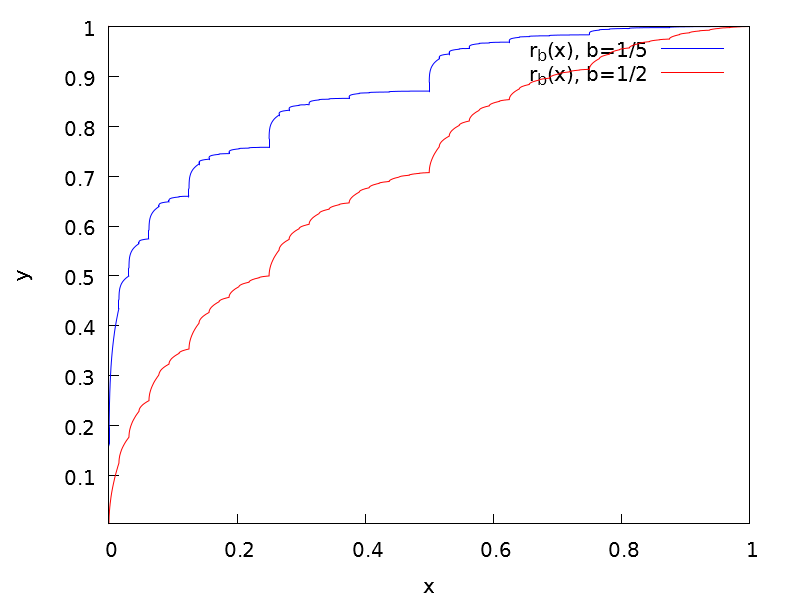}
\end{tabular}
\caption{De Rham's iteration function system}
A -- Dependence on the iteration at k=0 (solid line )and k=6 (dashed line). 
B -- Dependence on the shape parameter, b=$1/2$ (dashed line) and $b=1/5$ (solid line).
Plots are produced with the computer algebra system \textsl{Maxima}.
\label{fig:derham1s}
\end{figure}

This calculation can be summarized in the following proposition:
\begin{proposition}
	\label{prop:derham1}
	
	Let $\mathbb{Q}_2$ denote the set of dyadic rationals. 
	Let  $s_n = \sum\limits_{k=1}^{n} d_k $ denote the sum of the digits for the number  
	$x =  \overline{0.d_1 \ldots d_n}, \ d \in \{0, 1 \}$ in  binary representation, 
	then
	\[
	\fdiffplus{R_a}{x}{\beta}= \left\{  
	\begin{array}{ll} \left( 2^\beta -1 \right)^{s_{n}-1}, & x \in \mathbb{Q}_2 \\
	0, & x \notin  \mathbb{Q}_2
	\end{array}
	\right.
	\]
	for $\beta= - log_2 a$, $a \neq \dfrac{1}{2}$.
	For $\beta < - log_2 a$ by \cite{Prodanov2015} we have
	$\fdiffplus{R_a}{x}{\beta}=\fdiffmin{R_a}{x}{\beta}=0$.
	
\end{proposition}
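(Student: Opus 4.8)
This proposition formalizes the computation of the preceding Example, and the plan is to carry out that computation in three stages and then pass to the limit. First I would record the Lomnicki--Ulam arithmetic representation \cite{Lomnicki1934} of $R_a$ on dyadic rationals: unrolling the two functional equations, each binary digit $d_j$ of $x$ applies one of the affine branches $t\mapsto at$ (when $d_j=0$) or $t\mapsto (1-a)t+a$ (when $d_j=1$), and composing them writes $R_a(x)$ as a finite sum whose $k$-th term carries the weight $a^{\,k-s_k}(1-a)^{s_k}$ governed by the running digit sum $s_k=\sum_{j\le k}d_j$. From this one reads off the dyadic increment $R_a(x+2^{-n})-R_a(x)$: after reducing to the carry-free case by choosing the expansion of $x$ whose $n$-th digit is $0$, it equals $\tfrac{a}{1-a}\,a^{\,n-s}(1-a)^{s}$ with $s$ the digit sum of $x+2^{-n}$. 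Dividing by $\epsilon^{\beta}=2^{-n\beta}$ gives $\fracvarplus{R_a}{x}{\beta}=(2^{\beta}a)^{n}\left(\tfrac1a-1\right)^{s-1}$, and here the critical relation $\beta=-\log_2 a$, i.e.\ $2^{\beta}a=1$, does the essential work: it annihilates the scale factor $(2^{\beta}a)^{n}$ and leaves the purely combinatorial value $(2^{\beta}-1)^{\,s-1}$, independent of the scale index $n$. The hypothesis $a\neq\tfrac12$ forces $\beta<1$, hence $0<2^{\beta}-1<1$, which is used in the limit.

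It remains to pass to $\epsilon\to 0^{+}$. If $x\in\mathbb{Q}_2$, the expansion terminates, so for all large $n$ the digit sum of $x+2^{-n}$ is the fixed number $s_n$ of the statement and the variation along the dyadic scales is constantly $(2^{\beta}-1)^{s_n-1}$; for an arbitrary scale $\epsilon\in(2^{-(n+1)},2^{-n}]$ one brackets $[x,x+\epsilon]$ between two consecutive dyadic sub-intervals and uses the monotonicity of $R_a$ together with the exact oscillation $a^{\,n-s}(1-a)^{s}$ of $R_a$ over a level-$n$ dyadic interval (with $s$ the digit sum of that interval's left endpoint) to squeeze $\fracvarplus{R_a}{x}{\beta}$, from which one would conclude $\fdiffplus{R_a}{x}{\beta}=(2^{\beta}-1)^{s_n-1}$. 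If $x\notin\mathbb{Q}_2$, the expansion is non-terminating with infinitely many $0$'s and infinitely many $1$'s, so the digit sum grows without bound; since $0<2^{\beta}-1<1$ the combinatorial value $(2^{\beta}-1)^{\,s-1}\to 0$, and the same bracketing estimate — now with the digit sum of the level-$n$ interval containing $x$, which also diverges because $x$ has infinitely many $0$'s — gives $\fracvarplus{R_a}{x}{\beta}\to 0$ for every $\epsilon\to 0^{+}$, hence $\fdiffplus{R_a}{x}{\beta}=0$. Finally, for $\beta<-\log_2 a$ the asserted simultaneous vanishing of both one-sided velocities is the order-monotonicity of fractional velocities established in \cite{Prodanov2015} — if a one-sided velocity of order $\gamma$ is finite then the one of order $\beta<\gamma$ vanishes — applied with $\gamma=-\log_2 a$, using that $R_a\in\holder{-\log_2 a}$ for $a\in(\tfrac12,1)$.

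The step demanding the most care is the passage, at dyadic points, from the value along the dyadic subsequence $\epsilon=2^{-n}$ (which is immediate from the three stages above) to the genuine limit $\epsilon\to 0^{+}$; this is precisely where the oscillation/H\"older machinery of \cite{Prodanov2015} is needed, and it is the only non-algebraic ingredient. The carry bookkeeping in the increment formula and the index matching between the running digit sum and the $s_n$ of the statement are tedious but routine, and everything else collapses under the single substitution $2^{\beta}a=1$.
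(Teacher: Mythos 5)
Your core computation is exactly the paper's own argument (the Example immediately preceding the Proposition): the Lomnicki--Ulam arithmetic representation, the carry-free dyadic increment $a^{\,n-s}(1-a)^{s}$ over a level-$n$ dyadic interval, and the collapse of the scale factor $(2^{\beta}a)^{n}$ under $2^{\beta}a=1$. Your treatment of the non-dyadic case and of the vanishing for $\beta<-\log_{2}a$ via the order bound from \cite{Prodanov2015} is also in line with the paper (modulo the slip that ``$a\neq\tfrac12$ forces $\beta<1$'' --- that requires $a>\tfrac12$, which you only assume later).

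The genuine gap is in the step you yourself single out as critical: the passage at dyadic points from the subsequence $\epsilon=2^{-n}$ to the full limit $\epsilon\to0^{+}$. The monotonicity bracketing does not squeeze. For $\epsilon\in(2^{-(n+1)},2^{-n}]$ the oscillation bounds give only
\[
a\left( 2^{\beta}-1\right)^{s}\;\leq\;\fracvarplus{R_a}{x}{\beta}\;\leq\;a^{-1}\left( 2^{\beta}-1\right)^{s}\ecma
\]
since both the increment and the denominator $\epsilon^{\beta}$ carry a factor $2^{\beta}$ of slack; the two bounds stay a fixed factor $2^{2\beta}$ apart for all $n$, so no limit follows. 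Worse, the full limit genuinely fails to exist at dyadic points: at $x=0$ one has $\fracvarplus{R_a}{0}{\beta}=1$ along $\epsilon=2^{-n}$, whereas $R_a\left(3\cdot4^{-m}\right)=a^{2m-1}(2-a)$ gives $\fracvarplus{R_a}{0}{\beta}=(2-a)/(3^{\beta}a)$ along $\epsilon=3\cdot4^{-m}$, and $2^{1+\beta}-1>3^{\beta}$ for $0<\beta<1$ shows these constants differ; by the self-affine structure the same oscillation occurs at every dyadic $x$. So the displayed formula can only be justified as a limit along the dyadic scales $\epsilon=2^{-n}$ --- which is, silently, all the paper's own proof establishes. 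Your argument for $x\notin\mathbb{Q}_2$ survives because there both brackets tend to $0$ (the relevant digit sums diverge), but the claimed squeeze at dyadic points cannot be repaired as stated.
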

\begin{remark}
The function given in the example is known under several different names -- "De Rham's function", 
"Lebesgue's singular function"  and also "Salem's singular function". 
This function  was also defined in different ways \cite{Cesaro1906,Faber1909}.
Lomnicki and Ulan \cite{Lomnicki1934}, for example, give a probabilistic construction as follows.
In a an imaginary experiment of flipping a possibly "unfair" coin with probability $a$ of heads (and 
$1 - a$ of tails). Then
$
R_a(x) = \mathbb{P} \left\lbrace t \leq x \right\rbrace 
$
after infinitely many trials  where $t$ is the record of the trials represented by a binary sequence.  
Finally, Salem \cite{Salem1943} gives a geometrical construction.
\end{remark}

We will reach the same result using the formalism of the scale embedding.
However, first we need to establish some general results.

\begin{lemma}[Scale recursion invariance]
	\label{th:screc}
	Consider the differentiable map  $
		\Phi: \fclass{F}{} \mapsto \fclass{F}{}
	$ acting on the complete metric space $\fclass{F}{}$ and the recursion 
	$ f_{n+1} (x)= \Phi [f_n] (x) $ generated by successive applications of $\Phi$.
	Suppose further that $f_0 \in \fclass{C}{1}$ about $x$.
	Then, considering the scale embedding,  there exists $\epsilon^\prime$, such that
	\[
	\svarpm{f_{n+1}}{x}{\alpha} = \svar{f_0}{x}{\alpha}{\epsilon^\prime \pm} \epnt
	\]
\end{lemma}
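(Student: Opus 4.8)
The plan is to reduce the assertion to a single-step identity for $\Phi$, to iterate that identity $n+1$ times, and to finish with a continuity argument that produces the scale $\epsilon^\prime$. For the single step I would fix $x$ and, following the scale embedding $\mathcal{E}$, regard $h(x\pm\epsilon)$ as a function of $\epsilon$ alone, so that $\svarpm{h}{x}{\alpha} = \tfrac{1}{1-\{\alpha\}}\,\epsilon^{\alpha}\,\partial_\epsilon h(x\pm\epsilon)$. Concretely, $\Phi$ being the differentiable operator of an iterated function system, in a neighbourhood of $x$ it acts on a function $h$ by one affine substitution of the argument together with a multiplicative scalar, $\Phi[h](y) = c\,h(v(y)) + d$ with $v(y) = \lambda y + \mu$. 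Since $v(x\pm\epsilon) = v(x)\pm\lambda\epsilon$, the chain rule gives $\partial_\epsilon\Phi[h](x\pm\epsilon) = \pm\,c\lambda\,h'(v(x)\pm\lambda\epsilon)$; substituting this into the definition and writing $\epsilon^{\alpha}=\lambda^{-\alpha}(\lambda\epsilon)^{\alpha}$ to recover a scale velocity of $h$, I obtain
\[
\svar{\Phi[h]}{x}{\alpha}{\epsilon \pm} = c\,\lambda^{1-\alpha}\;\svar{h}{v(x)}{\alpha}{\lambda\epsilon \pm} \epnt
\]
When $\alpha$ is the scaling exponent carried by $\Phi$ (equivalently $c=\lambda^{\alpha-1}$, the normalization making the scale velocity of the self-similar limit finite and non-trivial) the prefactor equals $1$, and at a point fixed by the branch of $v$ in force the base point does not move, so that $\svar{\Phi[h]}{x}{\alpha}{\epsilon \pm} = \svar{h}{x}{\alpha}{\lambda\epsilon \pm}$.

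Next I would unfold the recursion to $f_{n+1}=\Phi^{n+1}[f_0]$ and iterate the previous identity. The case $n=0$ is trivial with $\epsilon^\prime=\epsilon$; for the inductive step I would apply the one-step identity with $h=f_n=\Phi^{n}[f_0]$, which replaces $f_{n+1}$ at scale $\epsilon$ by $f_n$ at scale $\lambda\epsilon$ (and, in general, at the successor base point $v(x)$, tracked along the orbit $x,v(x),v^{2}(x),\dots$). Here one uses that $\Phi$ maps $\fclass{C}{1}$ functions to $\fclass{C}{1}$ functions locally, so each intermediate $f_k$ is differentiable about the relevant orbit point and every scale velocity in the chain is well defined, provided $\epsilon$ is small enough that the whole orbit remains inside the neighbourhood where $f_0\in\fclass{C}{1}$. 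After $n+1$ steps this yields $\svarpm{f_{n+1}}{x}{\alpha} = K\,\svar{f_0}{y}{\alpha}{\Lambda\epsilon \pm}$ with $y$ the terminal orbit point, $\Lambda=\prod_{k=1}^{n+1}\lambda_k$ and $K=\prod_{k=1}^{n+1}c_k\lambda_k^{1-\alpha}$. In the normalized case $K=1$, $y=x$ this is already the assertion with $\epsilon^\prime=\Lambda\epsilon$; in general, since $\epsilon^\prime\mapsto\svar{f_0}{x}{\alpha}{\epsilon^\prime \pm}=\tfrac{1}{1-\{\alpha\}}(\epsilon^\prime)^{\alpha}f_0'(x\pm\epsilon^\prime)$ is continuous and vanishes as $\epsilon^\prime\to0^{+}$, its range is an interval containing $0$, and the real number on the right-hand side lies in that range once $\epsilon$ is small and $f_0'$ is non-degenerate near $x$ (for instance $\epsilon^\prime=K^{1/\alpha}\Lambda\epsilon$ works whenever $f_0$ is affine), so the intermediate-value theorem supplies a suitable $\epsilon^\prime$.

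The crux is the single-step identity: one has to pin down the precise sense in which the differentiable map $\Phi$ interacts with the scale embedding, namely that $\Phi$ intertwines with $\mathcal{E}$ only after the linear rescaling $\epsilon\mapsto\lambda\epsilon$, which is exactly what lets a power $\epsilon^{\alpha}$ be converted into $(\lambda\epsilon)^{\alpha}$ and reabsorbed into a scale velocity of $f_0$. A secondary nuisance is the bookkeeping of which branch of $\Phi$ is in force at each level and the avoidance of the finitely many points where a piecewise-defined $\Phi$ fails to be smooth (the dyadic rationals in the De Rham example); I would handle this by working at a generic point of a fixed branch, or at a genuine fixed point of the composition, and by shrinking $\epsilon$ at each level so that the orbit never leaves the good neighbourhood.
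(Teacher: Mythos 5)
Your argument is sound at the level of rigor the lemma admits, but it takes a genuinely different route from the paper's. The paper never introduces the affine substitution $v(y)=\lambda y+\mu$: it treats $\Phi$ as acting pointwise on the value of $f_n$, so that differentiating in $\epsilon$ produces only the scalar chain-rule factor $\frac{\partial\Phi}{\partial f_n}$, giving the one-step identity $\svarplus{f_{n+1}}{x}{\alpha}=\frac{\partial\Phi}{\partial f_n}\,\svarplus{f_{n}}{x}{\alpha}$ with no rescaling of $\epsilon$ and no motion of the base point. Iterating yields a single product $P=\prod_{k=1}^{n}\frac{\partial\Phi}{\partial f_{n-k}}$ in front of $\svarplus{f_0}{x}{\alpha}$, and the paper produces $\epsilon^\prime$ explicitly by absorbing $P$ into the power $\epsilon^{\alpha}$, namely $\epsilon^\prime=P^{-1/\alpha}$ (with a sign split when $P<0$); this closed form is exactly what reappears as the scale-regularizing sequence $\epsilon_n$ in the subsequent fixed-point theorem. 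Your version instead builds the argument rescaling into $\Phi$ --- which is what the De Rham recursion actually does, and what the paper's own worked example uses, since $r_n(x)=2^{-a}r_{n-1}(2x)$ visibly substitutes the argument --- at the price of a base-point drift along the orbit $x, v(x), v^{2}(x),\dots$ that forces you to finish with an intermediate-value argument rather than an explicit $\epsilon^\prime$. What your route buys is fidelity to the intended application (the paper's abstract chain rule is silently inconsistent with its own example on this point); what it costs is the explicit $\epsilon^\prime$ that the later theorem relies on, together with the extra hypotheses you are honest about needing (small $\epsilon$, non-degenerate $f_0^\prime$, a fixed branch or a fixed point of the composition), none of which appear in the lemma's statement. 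Neither proof establishes the lemma in the generality in which it is phrased, so I would not call the difference a gap on your side so much as a different, and arguably more faithful, reading of what $\Phi$ is.
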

\begin{proof} 
	We push the recursive function to scale embedding. 
	By application of the scale  operator we get
	\[
	\svarplus{f_{n+1}}{x}{\alpha} = \frac{1}{1 - \{ \alpha \}} \epsilon^{\alpha} \, \frac{\partial \Phi}{\partial f_n}    \frac{\partial f_n}{\partial \epsilon} (x+\epsilon) = 
	\frac{\partial \Phi}{\partial f_n} \; \svarplus{f_{n}}{x}{\alpha} \epnt
	\]
	On the other hand, from the definition of recursion  we have
	$
	f_{n+1} (x)=  \prod_{k=1}^{n}  \Phi \ \circ f_{0}  \left( x \right) 
	$.
	Therefore, the first equation can be applied $n$ times to finally yield
	\begin{equation}
	\svarplus{f_{n+1}}{x}{\alpha} =  \left( \prod_{k=1}^{n} \frac{\partial \Phi}{\partial f_{n-k}} \right)
	\svarplus{f_0}{x}{\alpha} \epnt
	\end{equation}
	Then if the product is positive we can identify a positive number 
	\[
	\epsilon = \frac{1}{  \left( \prod_{k=1}^{n} \frac{\partial \Phi}{\partial f_{n-k}} \right)^{1/\alpha }} \epnt
	\]
	Similarly, if the product is negative we have a negative number
	\[
		\epsilon = - \frac{1}{  \left| \prod_{k=1}^{n} \frac{\partial \Phi}{\partial f_{n-k}}  \right| ^{1/\alpha }} \epnt
	\]
	The backward case is proven by identical reasoning. 
\end{proof}
Having established this result we can proceed to stating the main Theorem:
\begin{theorem}[Scale fixed point derivation]
	Consider the recursion $ f_{n+1} (x)= \Phi [f_n] (x) $ for a differentiable map $\Phi$ under the same hypothesis.
	Suppose that  $f_0 \in \fclass{C}{1}$  does not vanish in an open interval about \textit{x}.
	Then if 
	$
	\frac{\partial \, \Phi}{\partial f_{k}} > 1, \ \forall k >0
	$
	 there exists a null Cauchy sequence $ \{ \epsilon \}_k^{\infty}$, such that
	\[
	\llim{n}{\infty}{ \svarpm{f_{n}}{x}{1- \alpha}  } =\fdiffpm{f}{x}{  \alpha}  \epnt
	\]
	This sequence will be named \textbf{scale--regularizing} sequence.
\end{theorem}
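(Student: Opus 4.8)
The plan is to transport the target $\fdiffpm{f}{x}{\alpha}$ down to the initial datum $f_{0}$ by Lemma~\ref{th:screc}, and then to spend the remaining freedom in the scale parameter by slaving it to the iteration index $n$, so that the accumulated amplification exactly cancels the vanishing of the power weight $\epsilon^{1-\alpha}$.

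First I would invoke the scale recursion invariance lemma in the form that appears in its proof: for every $n$ and every $\epsilon>0$,
\[
\svarpm{f_{n}}{x}{1-\alpha}= \Pi_{n}\,\svarpm{f_{0}}{x}{1-\alpha},
\qquad \Pi_{n}:=\prod_{j}\frac{\partial\Phi}{\partial f_{j}}\;>\;1 ,
\]
the product running over the intermediate iterates exactly as in the lemma's proof. Since $0<\alpha<1$ we have $1-\{1-\alpha\}=\alpha$, so the right--hand side is the elementary quantity $\Pi_{n}\,\dfrac{\epsilon^{1-\alpha}}{\alpha}\,\partial_{\epsilon}f_{0}(x\pm\epsilon)$, available because $f_{0}\in\fclass{C}{1}$. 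Under the hypothesis $\partial\Phi/\partial f_{k}>1$ --- read, as is automatic for an iterated function system where these derivatives are constants exceeding $1$, so that they stay bounded away from $1$ --- the sequence $(\Pi_{n})$ is strictly increasing and $\Pi_{n}\to\infty$.

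Next I would set
\[
\epsilon_{n}:=c\,\Pi_{n}^{-1/(1-\alpha)},\qquad c>0 \text{ to be fixed},
\]
so that $(\epsilon_{n})$ decreases monotonically to $0$ and is therefore a null Cauchy sequence --- the candidate scale--regularizing sequence. Substituting, the amplification is absorbed exactly, $\Pi_{n}\,\epsilon_{n}^{1-\alpha}=c^{1-\alpha}$, and hence
\[
\svarpm{f_{n}}{x}{1-\alpha}\Big|_{\epsilon=\epsilon_{n}}
=\pm\,\frac{c^{1-\alpha}}{\alpha}\,f_{0}'(x\pm\epsilon_{n})
\;\longrightarrow\;\pm\,\frac{c^{1-\alpha}}{\alpha}\,f_{0}'(x)
\quad (n\to\infty),
\]
the limit existing by continuity of $f_{0}'$; the assumption that $f_{0}'$ does not vanish on an open interval about $x$ makes the limit a well--defined nonzero number of definite sign and also licenses, for each fixed $n$, the l'H\^opital reduction inherited from Proposition~\ref{prop:scaledif}.

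What remains --- and this is the delicate step --- is to identify $\pm c^{1-\alpha}f_{0}'(x)/\alpha$ with $\fdiffpm{f}{x}{\alpha}$, for the computation so far lives entirely on $f_{0}$, whereas the target lives on the limit $f=\lim_{n}f_{n}$. If $\fdiffpm{f}{x}{\alpha}=0$ I would instead let $c=c_{n}\to0$ (say $c_{n}=1/n$); then $\epsilon_{n}\to0$ still and the displayed limit is $0$. If $\fdiffpm{f}{x}{\alpha}\neq0$, then since the recursion converges ($\Phi$ being, as for iterated function systems, a contraction) one has $f_{n}\to f$ in $\fclass{F}{}$, hence pointwise near $x$, so $\fracvarpm{f_{n}}{x}{\alpha}\to\fracvarpm{f}{x}{\alpha}$ for each fixed $\epsilon$ while $\fracvarpm{f}{x}{\alpha}\to\fdiffpm{f}{x}{\alpha}$ as $\epsilon\to0$; a diagonal (Moore--Osgood type) extraction then pins the free constant to $c=\bigl(\alpha\,|\fdiffpm{f}{x}{\alpha}|/|f_{0}'(x)|\bigr)^{1/(1-\alpha)}$, the sign of $f_{0}'$ being chosen compatibly with that of $\fdiffpm{f}{x}{\alpha}$ --- which is possible, e.g.\ with $f_{0}'>0$, whenever the two signs match, as they do for De Rham's function in the forward case. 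The main obstacle is precisely this reconciliation of the two non--commuting limits $n\to\infty$ and $\epsilon\to0$ along one and the same sequence $\epsilon_{n}$: one must verify that the contraction of $\Phi$ together with the uniform non--vanishing of the derivatives along the orbit make the diagonal extraction legitimate while keeping $\epsilon_{n}$ null, and, should the abstract argument prove too crude, fall back on the explicit self--similar recursion for $\svarpm{f_{n}}{x}{1-\alpha}$ --- the very mechanism used in the De Rham computation --- to match the constant directly.
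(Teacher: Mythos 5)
Your argument follows the same route as the paper's own proof: push the recursion into scale space via Lemma~\ref{th:screc}, absorb the accumulated product $\Pi_n$ into the scale variable to manufacture the null sequence $\epsilon_n$, and convert the surviving quantity $\epsilon^{1-\alpha}f_0'(x\pm\epsilon)/\alpha$ into a fractional velocity through the l'H\^opital mechanism of Proposition~\ref{prop:scaledif}. The differences are in bookkeeping and in candour. (i) Your exponent $-1/(1-\alpha)$ is the one consistent with the order $1-\alpha$ in the statement; the paper imports $-1/\alpha$ verbatim from the lemma, whose order is $\alpha$. (ii) You correctly observe that $\partial\Phi/\partial f_k>1$ alone gives only monotonicity of $(\epsilon_n)$, and that $\epsilon_n\to 0$ needs the derivatives bounded away from $1$; the paper only concludes that $\{p\,\epsilon_k\}$ is Cauchy and never argues nullity. (iii) Most importantly, the ``delicate step'' you isolate --- identifying the number produced from $f_0$ with $\fdiffpm{f}{x}{\alpha}$ for the limit function $f=\lim_n f_n$ --- is a genuine gap, and the paper does not close it either: its proof simply equates $\llim{n}{\infty}{\svarpm{f_{n}}{x}{1-\alpha}}$ with the $\epsilon\to 0$ limit of the scale velocity of $f_0$ and stops there, even though for a fixed $f_0\in\fclass{C}{1}$ that latter limit is $0$; the divergence $\Pi_n\to\infty$ is precisely what prevents the two limits from commuting, as your computation $\Pi_n\,\epsilon_n^{1-\alpha}=c^{1-\alpha}$ makes explicit. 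The identification with the fixed point is deferred in the paper to the following, unproven, proposition. Your proposed repair --- tuning the free constant $c$ so that the answer equals $\fdiffpm{f}{x}{\alpha}$ --- should be discarded, since it makes the existence claim circular; but you are right that this reconciliation, and not the manipulations you carry out, is where the real work lies, and your write-up is more honest about that than the source.
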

\begin{proof}
To prove the claim we can observe that
	\[
	\epsilon_n = \frac{1}{  \left( \prod_{k=1}^{n} \frac{\partial \Phi}{\partial f_{n-k}} \right)^{1/\alpha }} 
	\]
	by the statement of the Scale recursion invariance lemma.
	Then since 	$
	\frac{\partial \, \Phi}{\partial f_{k}} > 1
	$ we have
	$ \epsilon_{n+1}< \epsilon_n <1 $. 
	Therefore, $ \{ p \, \epsilon_k\}_k$ is a Cauchy sequence for a positive  number \textit{p}.
	Further, setting $\beta =1- \alpha$ we proceed to evaluation of the limit
	\[
		\llim{n}{\infty}{ \svarpm{f_{n}}{x}{1-\beta}  } = 
		\llim{\epsilon}{0}{ \svarpm{f_{0}}{x}{1- \beta}  } = 
		\frac{1}{\beta}\llim{\epsilon}{0}{\epsilon^{1-\beta} f^{\prime} (x + \epsilon)}
	\]
	The last limit is also the limit of 
	$
	\llim{\epsilon}{0}{ \frac{  f^{\prime} (x + \epsilon) - f(x) }{ \epsilon^\beta}}
	$
	by the application of l'H\^opital's rule under the conditions identified in Prop. \ref{prop:scaledif}.
	The backward case can be proven by identical reasoning. 
\end{proof}

\begin{proposition}
Let the limit 
	$
	F(x) =\llim{n}{\infty}{f_{n}(x)}
	$
of the recursion $ f_{n+1} (x)= \Phi [f_n] (x) $
exists
and
	$
	\frac{\partial \, \Phi}{\partial f_{k}} > 1, \ \forall k >0 \epnt
	$
Then 
	\[
	\fdiffpm{f}{x}{  \alpha} = \llim{n}{\infty}{ \svarpm{f_{n}}{x}{1- \alpha}  } \epnt
	\]
\end{proposition}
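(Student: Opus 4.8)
The plan is to obtain this statement as a packaging of the \textbf{Scale fixed point derivation} theorem, the sole new ingredient being the hypothesis that the pointwise limit $F(x)=\llim{n}{\infty}{f_n(x)}$ exists. That theorem already proves, under exactly the standing assumptions on $\Phi$, on $f_0$, and on $\partial\Phi/\partial f_k$, that $\svarpm{f_n}{x}{1-\alpha}$ converges along a scale--regularizing sequence to a fractional velocity; I would use the existence of $F$ to identify that velocity as $\fdiffpm{F}{x}{\alpha}$ — in particular the right-hand limit in the statement exists because the left-hand one does.

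First I would recall Lemma \ref{th:screc}: for every $n$ it equates $\svarpm{f_n}{x}{1-\alpha}$ with the scale velocity of $f_0$ read at a rescaled argument $\epsilon_n$ built from the product $\prod_{k=1}^{n}\frac{\partial\Phi}{\partial f_{n-k}}$, and under the monotonicity hypothesis $\partial\Phi/\partial f_k>1$ for all $k>0$ the $\epsilon_n$ form a null Cauchy sequence — the scale--regularizing sequence. Hence $\llim{n}{\infty}{\svarpm{f_n}{x}{1-\alpha}}$ equals the limit of $\svarpm{f_0}{x}{1-\alpha}$ taken along $\epsilon_n\downarrow0$. Moreover each iterate $f_n\in\fclass{C}{1}$ is a finite composition of differentiable maps issued from $f_0$, and $f_n'$ does not vanish near $x$ (being $f_0'$ times the positive product above), so Proposition \ref{prop:scaledif} applies to every $f_n$ and lets me trade, at matched scales and in the limit, the scale velocity $\svarpm{f_n}{x}{1-\alpha}$ for the fractional variation $\fracvarpm{f_n}{x}{\alpha}$.

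Next I would bring in $F$. For each fixed scale $\epsilon>0$ the quantity $\fracvarpm{f_n}{x}{\alpha}$ depends on $f_n$ only through its values at $x$ and $x\pm\epsilon$, so $f_n(x)\to F(x)$ and $f_n(x\pm\epsilon)\to F(x\pm\epsilon)$ give $\fracvarpm{f_n}{x}{\alpha}\to\fracvarpm{F}{x}{\alpha}$ as $n\to\infty$. Assembling the pieces, $\svarpm{f_n}{x}{1-\alpha}$, evaluated along $\epsilon_n$, is (up to the normalisation constant in the definition of the scale velocity) a diagonal of the two-parameter family $(n,\epsilon)\mapsto\fracvarpm{f_n}{x}{\alpha}$ whose limit in $n$ at each fixed $\epsilon$ is $\fracvarpm{F}{x}{\alpha}$ and whose subsequent limit in $\epsilon$ is $\fdiffpm{F}{x}{\alpha}$; therefore $\llim{n}{\infty}{\svarpm{f_n}{x}{1-\alpha}}=\llim{\epsilon}{0}{\fracvarpm{F}{x}{\alpha}}=\fdiffpm{F}{x}{\alpha}$, as claimed.

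The step I expect to be the genuine obstacle is the interchange of the limits $n\to\infty$ and $\epsilon\to0$: the convergence $f_n\to F$ is only pointwise, and the derivatives $f_n'$ do not converge pointwise (they typically diverge) — which is exactly why the ordinary derivative degenerates in the limit — so the two limits may not be swapped carelessly. What rescues the argument is that they are not independent here: the scale--regularizing sequence $\epsilon_n$ produced by Lemma \ref{th:screc} couples $n$ with $\epsilon$ monotonically, collapsing the double limit to the single limit of $\svarpm{f_0}{x}{1-\alpha}$ along a null Cauchy sequence, and the existence of $F$ enters only at the very end, to name the common value of the two iterated limits. I would also note in passing that the degenerate cases — when $\prod_{k}\partial\Phi/\partial f_{n-k}$ does not diverge, so $\epsilon_n$ fails to reach $0$, or when it changes sign, so the one--sided selection is lost — are excluded by the standing hypothesis $\partial\Phi/\partial f_k>1$ and, for the iterated function systems of Section \ref{sec:singular}, by the unboundedness of that product.
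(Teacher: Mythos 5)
The paper offers no separate proof of this proposition: it is stated as an immediate packaging of the Scale fixed point derivation theorem, which is exactly how you treat it, so your overall route coincides with the paper's implicit one. The only substantive content you add --- and the only place where something can go wrong --- is the identification of $\llim{n}{\infty}{\svarpm{f_n}{x}{1-\alpha}}$ with $\fdiffpm{F}{x}{\alpha}$ for the limit function $F$. Your argument for this runs through the two-parameter family $(n,\epsilon)\mapsto\fracvarpm{f_n}{x}{\alpha}$: pointwise convergence $f_n\to F$ gives the iterated limit $\lim_{\epsilon}\lim_{n}$, while Lemma \ref{th:screc} gives the diagonal limit along the scale--regularizing sequence $\epsilon_n$; equating the two is a Moore--Osgood type interchange that requires some uniformity in one of the variables, which neither you nor the paper establishes. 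You correctly flag this as the genuine obstacle, but the sentence claiming the coupling ``collapses the double limit'' does not actually close it: collapsing to $\svarpm{f_0}{x}{1-\alpha}$ along $\epsilon_n$ yields a limit expressed entirely through $f_0$, and relating that to increments of $F$ --- rather than of $f_0$, whose fractional velocity of order $\alpha<1$ vanishes outright since $f_0\in\fclass{C}{1}$ --- is precisely the step that needs the interchange. Since the paper supplies no argument here at all, your proposal is faithful to its approach and at least as complete; just be aware that in both versions the identification of the named limit with $\fdiffpm{F}{x}{\alpha}$ remains an assertion rather than a proof, and is only verified concretely in the De Rham example.
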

These results can generalize to sequences of maps (i.e. iteration function systems) with disjoint domains.
In this case instead of a single recursion equation one needs to consider the Hutchinson operator, i.e. a system of equations  \cite{Hutchinson1981}.

\begin{figure}[h]
	\begin{center}
	\includegraphics[width=0.75\linewidth, bb=0 0 800 600]{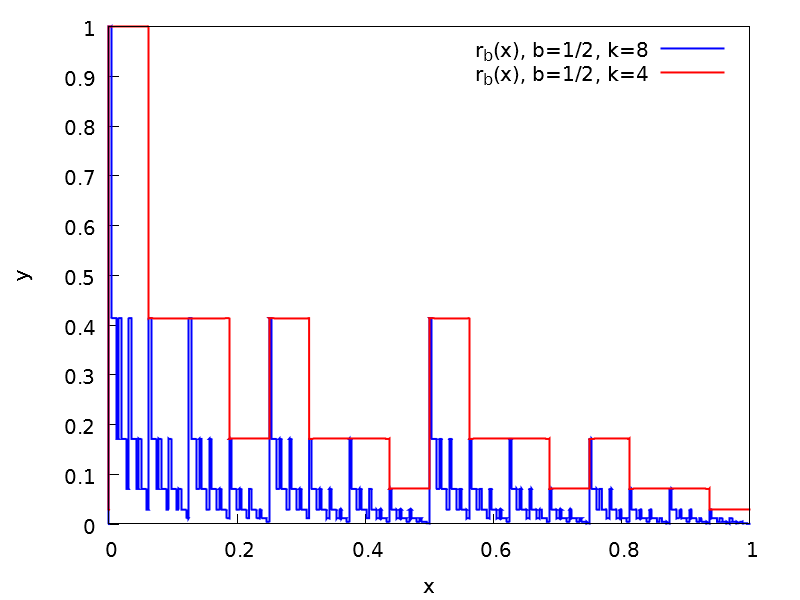} 
	\end{center}
	\caption{Fractional velocity approximation of De Rham's function system}
   Dependence on the iteration at k=8 (solid line) and k=4 (dashed line). 
	Plots are produced with the computer algebra system \textsl{Maxima}.
	\label{fig:derham1d}
\end{figure}
\begin{example}
	
To illustrate this proposition we will iterate the recursive function
$r_{0}(x, a)= x^{a}$, which verifies $ r_{0}(0, a)=0$ and  $ r_{0}(1, a)=1$. 
	\[
	r_n(x,a):= \left\{
	\begin{array}{ll}
	\frac{1}{2^a}  r_{n-1}(2 \, x, a) ,&  0  \leq x < \frac{1}{2}  \\
	(1-\frac{1}{2^a})  r_{n-1}(2 \, x-1, a) + \frac{1}{2^a} ,&  \frac{1}{2} \leq x \leq 1 \\
	\end{array} \right.
	\]
	The system is a slight re-parametrization of the original De Rham's function.

	By Banach's Fixed Point theorem and Hutchinson \cite{Hutchinson1981} we have
	$
	R_a(x) =\llim{n}{\infty}{r_{n}(x,a)}  
	$.
	
	We push the recursive function to scale embedding. 
	We observe that
	\[
	\svarplus {r_{0}}{x,a}{\alpha} = \frac{a }{1 - \{ \alpha \}} \epsilon^{\alpha} \left( x +\epsilon \right)^{a-1} \epnt
	\]
	Therefore, for $\alpha = 1- a $ we have	
	\[
	\svarplus {r_{0}}{x,a}{\alpha}=\frac{\epsilon^{\alpha} }{\left( x +\epsilon \right)^{\alpha}}
	\]
	so that \svarplus {r_{0}}{0,a}{\alpha}=1, which corresponds to scale invariance. 
	We observe that  scaling of the argument by 2  leads to repetition of the recursive pattern by a dyadic shift. 
	That is $r_{1} (0) \sim r_{0}(0)$    and  $  r_{1} (1/2) \sim r_{0}(0)$  and so on by induction.  
	So that if $x = \frac{m}{2^n} $ then $r_n (x) \sim r_{0}(0)$. 
	Further, we can discern two cases.
	
	Case 1, $x \leq 1/2 $ : Then application of the scale  operator leads to :
   \[ 
   \svarplus {r_{n}}{x,a}{\alpha} = \frac{2 }{1 - \{ \alpha \}} \left( \frac{\epsilon^{\alpha}}{2^a} \right)  \frac{\partial }{\partial \epsilon }r_{n-1}  (2 x+2 \epsilon, a) 
   \]
 Therefore, there is 
 \[
 \epsilon_n = \frac{1}{2^{n (1-a)/\alpha}} = \frac{1}{2^{n  }} 
 \]  
 Therefore, we can identify a scale-regularizing Cauchy sequence.
 
   Case 2, $x > 1/2$:  In a similar way :
   \[ 
   \svarplus {r_{n}}{x,a}{\alpha} = \frac{2 }{1 - \{ \alpha \}}  \left( \frac{2^a -1}{2^a}\right)   \epsilon^{\alpha} \frac{\partial }{\partial \epsilon } r_{n-1}(2 x + 2 \epsilon -1, a)
   \]
  Applying the same sequence results in a factor  $2^a - 1 \leq 1$. Therefore, the resulting transformation is a contraction. 
   Finally, we observe that if $ x = \overline{0.d_1 \ldots d_n} $ then the number of occurrences of Case 2 corresponds ot the amount of the digit $1$ in the binary notation, that is to 
   $s_n = \sum\limits_{k=1}^{n} d_k $.
   Therefore, we arrive to the same result as in Prop. \ref{prop:derham1}.
\end{example}

The former calculation can be formulated to algorithmically approximate the fractional velocity of the De Rham's function for any desired  iteration depth.
\begin{proposition}
$d_{0}(x, a)= 1$, 
\[
d_n(x,a):= 
\begin{cases} 
 d_{n-1}(2 \, x, a) ,&  0  \leq x < \frac{1}{2}  \\
\left(  2^a  -1 \right)   d_{n-1}(2 \, x-1, a) ,&  \frac{1}{2} \leq x \leq 1 \\
\end{cases} 
\]
\end{proposition}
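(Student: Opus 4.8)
The statement to prove, made explicit by the sentence preceding it, is that $d_n(x,a)$ is the finite-depth approximant of the fractional velocity: $\llim{n}{\infty}{d_n(x,a)}=\fdiffplus{R_a}{x}{\beta}$ with $\beta=a$, and in fact $d_n(x,a)$ already equals that value once $n$ exceeds the number of binary digits of a dyadic $x$. The plan is to realise $d_n$ as the multiplicative coefficient that appears when the De Rham recursion is pushed through the scale velocity operator with exponent $\alpha=1-a$, and then to quote the Scale fixed point derivation theorem for the limit. I write $\big|_{\eta}$ for the value of a scale velocity when its otherwise suppressed scale variable is set to $\eta$.

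First I would apply $\svarplus{\cdot}{x,a}{1-a}$ to each branch of $r_n=\Phi[r_{n-1}]$, exactly as in the proof of the Scale recursion invariance Lemma \ref{th:screc}, but retaining the dyadic rescaling of the argument. On the branch $0\le x<1/2$, differentiating $r_n(x+\epsilon)=2^{-a}\,r_{n-1}(2x+2\epsilon)$ in $\epsilon$ contributes the factor $2^{-a}\cdot 2$ (from $\partial\Phi/\partial r_{n-1}$ and the inner derivative), and re-expressing the result through $\svarplus{r_{n-1}}{2x,a}{1-a}\big|_{2\epsilon}$ absorbs a further factor $2^{-\alpha}$, so the surviving prefactor is $2^{\,1-a-\alpha}$; on the branch $1/2\le x\le 1$, the analogous computation for $r_n(x+\epsilon)=(1-2^{-a})\,r_{n-1}(2x+2\epsilon-1)+2^{-a}$ leaves the prefactor $2^{\,1-\alpha}(1-2^{-a})$. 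With $\alpha=1-a$ these collapse to $2^{0}=1$ and to $2^{a}(1-2^{-a})=2^{a}-1$, giving
\[
\svarplus{r_n}{x,a}{1-a}=
\begin{cases}
\svarplus{r_{n-1}}{2x,a}{1-a}\big|_{2\epsilon}, & 0\le x<1/2,\\
\bigl(2^{a}-1\bigr)\,\svarplus{r_{n-1}}{2x-1,a}{1-a}\big|_{2\epsilon}, & 1/2\le x\le 1,
\end{cases}
\]
the scale recursion already obtained in the Example above.

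Next I would unroll this identity $n$ times: writing $x=\overline{0.d_1d_2\ldots}$ and letting $\sigma(x)=2x-d_1$ denote the dyadic shift, induction gives $\svarplus{r_n}{x,a}{1-a}=\bigl(\prod_{j=1}^{n}c_{d_j}\bigr)\,\svarplus{r_0}{\sigma^{n}(x),a}{1-a}\big|_{2^{n}\epsilon}$ with $c_0=1$ and $c_1=2^{a}-1$. Comparing the two branches, the coefficient $\prod_{j=1}^{n}c_{d_j}$ satisfies precisely the recursion in the statement --- $d_0\equiv 1$, and the step $d_{n-1}\mapsto d_n$ multiplies by $c_{d_1}$ and sends the argument to $2x-d_1$ --- so $d_n(x,a)=\prod_{j=1}^{n}c_{d_j}$ by induction, which is the assertion. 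The base factor occurring here is $\svarplus{r_0}{y,a}{1-a}\big|_{\delta}=\bigl(\delta/(y+\delta)\bigr)^{1-a}$ (using $1-\{1-a\}=a$), which is $1$ at $y=0$; this is the scale invariance of $r_0(x,a)=x^{a}$ noted in the Example.

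Finally, to see that $d_n$ approximates the fractional velocity, I would invoke the iteration-function-system form of the Scale fixed point derivation theorem along the scale-regularising null Cauchy sequence $\epsilon_n=2^{-n}$ isolated in the Example (the doubling of the scale at each unrolling is what selects it). At a dyadic $x$ with $m$ binary digits one has $\sigma^{n}(x)=0$ for all $n\ge m$ and $2^{n}\epsilon_n=1$, so the base factor is identically $1$ and $d_n(x,a)=(2^{a}-1)^{s_m}$ for every $n\ge m$, where $s_m$ counts the ones among $d_1,\ldots,d_m$; this is exactly $\fdiffplus{R_a}{x}{\beta}$ of Proposition \ref{prop:derham1} (the two statements count digits with conventions differing by one, the perturbed versus the unperturbed expansion). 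At non-dyadic $x$, $s_n\to\infty$, hence $d_n(x,a)=(2^{a}-1)^{s_n}\to 0=\fdiffplus{R_a}{x}{\beta}$ whenever $2^{a}-1<1$, matching the Example; together with Proposition \ref{prop:scaledif}, which identifies the scale velocity limit with the fractional velocity, this gives $\llim{n}{\infty}{d_n(x,a)}=\fdiffplus{R_a}{x}{\beta}$ on $[0,1]$. The step I expect to be the main obstacle is the scale bookkeeping of the second paragraph --- getting right how the scale variable must be rescaled to $2\epsilon$ under the dyadic rescaling of the argument, and checking that $\alpha=1-a$ is the unique exponent for which the $0\le x<1/2$ branch contributes no scale factor --- after which the induction and the limit (essentially the Example) are routine.
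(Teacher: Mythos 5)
Your proposal is correct and follows essentially the same route as the paper: the paper states this Proposition without any proof of its own, relying entirely on the immediately preceding Example (``the former calculation''), and your derivation is precisely a careful formalization of that Example --- pushing the two branches of the De Rham recursion through $\mathcal{S}^{\epsilon+}_{1-a}$, checking that $\alpha=1-a$ kills the prefactor on the $0\le x<\frac{1}{2}$ branch and leaves $2^{a}-1$ on the other, unrolling along the dyadic shift, and passing to the limit via the scale fixed point theorem with $\epsilon_n=2^{-n}$. Your version is in fact tighter than the paper's in two respects worth keeping: the explicit bookkeeping of the scale variable rescaling to $2\epsilon$ at each unrolling (which the paper's Example leaves implicit), and the honest acknowledgment of the off-by-one between $(2^{a}-1)^{s_n}$ produced by the recursion and the exponent $s_n-1$ appearing in Proposition \ref{prop:derham1}, which the paper passes over with ``we arrive to the same result.''
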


\section{Discussion}
\label{sec:disc}

Singular functions, which  increase continuously yet have derivative zero almost everywhere, have fascinated mathematicians for over a century. 
De Rham's function has been studied in \cite{Rham1957} and recently by \cite{Berg2000,Kawamura2011,Bernstein2013}.
The method presented in the current work allows for simultaneous identification of point-wise H\"older exponents of singular functions together with their \textit{sets of change} from recursive functional equations. 
On a second place, a relationship between the local scaling operators and the non-local fractional differ-integrals has been demonstrated. 
The scale-space fixed point theorem demonstrates how scale iteration can give rise to power laws in the described phenomenon. 
The approach presented here is similar to the one used in the renormalization group equations \cite{Gell-Mann1954}. 
However, here it arises in the context of characterization of the  {set of change} of a singular function. 
Discussing further such connections is beyond the scope of the present work.

The approach presented in this paper is a generalization and formalization of the approach of Nottale. In my understanding the scale variable is not directly observable, therefore it must be treated in an abstract manner and each application should give an appropriate interpretation  in terms of the studied observable of interest.
The work demonstrates how an intuitive scale-relativistic approach can  be made mathematically rigorous, 
while retaining the intuition of the initial concept. 
It is demonstrated that abandoning the assumption of linearity allows one to reconcile the mathematics of the limiting process with the intuitive treatment of scale as a variable. 
In addition correspondence between the scale-relativistic approach to more traditional mathematical frameworks, such as fractional calculus can be also demonstrated.

\section*{Acknowledgments}
The work has been supported in part by a grant from Research Fund - Flanders (FWO), contract numbers  G.0C75.13N, VS.097.16N.

\appendix

 
\section{Holder functions}
\label{sec:holder}

\begin{definition}
	\label{def:holder}
	Consider the (extended) mapping $f: \mathbb{R} \rightarrow \mathbb{C}  $.
	We say that $f$ is of (point-wise) H\"older  class \holder{\beta} of order $\beta$ ( $0< \beta \leq 1$ ) if for a given $\mathbf{x}$ there exist two positive constants 
	$C, \delta \in \mathbb{R} $ that for an arbitrary $ y$ in the domain of $f$ and given $|x-y| \leq \delta$ fulfill the inequality
	$
	| f (x) - f (y) |  \leq C |x-y|^\beta
	$, where $| \cdot |$ denotes the norm of the argument.  	
	
	For mixed orders $n +\beta $ the  H\"older class \holder{n+ \beta}  designates the functions  for which the inequality 
	\[
	| f (x)  - P_n (x-y) |  \leq C |x-y|^{n +\beta}  
	\]
	holds under the same hypothesis for $C, \delta $ and \textit{y}. $P_n (z)$ designates the Taylor polynomial  
	$
	P_n (z) = f (y)+ \sum\limits_{k=1}^{n}{ a_k z^k} 
	$ of order $n \in \mathbb{N}$.
\end{definition}

\section{Fractional integrals and derivatives }
\label{sec:frint}
The Riemann-Liouville fractional integral (or differ-integral)  defines a weighted average  of the function over an interval 
with a varying endpoint using a power law  weighting function.
The Riemann-Liouville differ-integral of order $\beta \geq 0$ is defined \cite{Oldham1974} as
\[
\frdiffiix{\beta}{ a }  f (x) = 
\dfrac{1}{\Gamma(\beta)} \int_{a}^{x}   f \left( t \right)  \left( x-t \right)^{\beta -1}dt 
\]
where $\Gamma(x) $ is the Euler's Gamma function. 
The fractional derivative of a function in the sense of Riemann-Liouville is defined in terms of the fractional integral as 
\begin{equation}
\frdiffixr{n +\beta}{ a }   f (x) = \left( \frac{d}{dx} \right)^n \frdiffiix{n-\beta}{ a }  f (x)
\end{equation}
which is usually specialized for $n=1$   in an explicit form by
\[
\frdiffixr{\beta}{ a }  f (x) = \dfrac{1}{\Gamma(1- \beta)}  \frac{d}{dx}  \int_{a}^{x}\frac{  f \left( t\right) }{{\left( x-t\right) }^{\beta }}dt \epnt
\]
 
The left Riemann-Liouville derivative of a function $f$ is defined for members of the functional space 
defined as
\[
E^{\alpha}_{a,+} ([a, b]):= \left\lbrace 
f \in L_1 , \frdiffiix{1- \alpha}{ a }  f (x) \in AC([a,b]) \right\rbrace 
\]
in Samko et al. \cite{Samko1993}[Definition 2.4, p. 44].

The fractional derivative of a function in the sense of Caputo  is defined as 
\begin{equation}
\frdiffix{\beta}{ a }  f (x) =  \frdiffiix{n-\beta}{ a }  f^{(n)} (x) \epnt
\end{equation}
or equivalently  for $n=1$  \cite{Caputo1967,Caputo1971}
\[
\frdiffix{\beta}{ a }  f (x) = \dfrac{1}{\Gamma(1- \beta)}\int_{a}^{x}\frac{  f^{\prime}\left( t\right) }{{\left( x-t\right) }^{\beta}}dt \epnt
\] 
The operator is defined for functions of  \fclass{C}{1}(a,b).
Both definitions coincide for problems where the function and its first $n$ derivatives vanish at the lower limit of integration, i.e. when $f(a)=0, \ldots , f^{(n)} (a) =0 $.
Caputo's definition is better suited for problems where the function and its derivatives do not vanish at the boundary of the domain, because in this case it is given as kind of regularization of the Riemann-Liouville differ-integral  to avoid divergences.  
Caputo's derivative is a left inverse of the fractional integral:
\begin{equation}
\label{eq:inverse1}
\frdiffix{\beta}{ a } \circ \frdiffiix{\beta}{ a }    f = f(x) \ecma
\end{equation}
while the  fractional integral is a conditional inverse of Caputo's derivative:  
\begin{equation}
\label{eq:inverse2}
\frdiffiix{\beta}{ a } \circ  \frdiffix{\beta}{ a }  f = f(x) - f(a^+)
\end{equation}
in the case when $\frdiffix{\beta}{ a }  f \neq 0 $.

\section*{Bibliography}

\end{document}